\tikzset{commutative diagrams/.cd,
  arrow style=tikz, diagrams={>=stealth}}
\gdef\yama{\mbox{\textbf{山}}}
\gdef\ten{\mbox{\textbf{天}}}
\numberwithin{equation}{section}
\numberwithin{table}{section}
\theoremstyle{plain}
\newtheorem{theorem}{Theorem}
  \newtheorem{proposition}[theorem]{Proposition}
  \newtheorem{lemma}[theorem]{Lemma}
  \newtheorem{conjecture}[theorem]{Conjecture}
\newtheorem*{theorem*}{Theorem}
\theoremstyle{remark}
\newtheorem{remark}[theorem]{Remark}
\newtheorem*{remark*}{Remark}
\newcommand{\FF}{\mathbb{F}}
\newcommand{\PP}{\mathbb{P}}
\newcommand{\QQ}{\mathbb{Q}}
\newcommand{\Qbar}{\overline{\mathbb{Q}}}
\newcommand{\pp}{\mathfrak{p}}
\newcommand{\bmu}{\mathbf{\upmu}}
\DeclareMathOperator{\Gal}{\textnormal{\textsf{Gal}}}
\DeclareMathOperator{\GL}{\textnormal{\textsf{GL}}}
\title{Equivalence of Conjectures on Heavenly Elliptic Curves}
\author[C.~McLeman]{Cam McLeman}
\address{Mathematics Department \\ University of Michigan-Flint \\ Flint MI, 48502 \\ United States}
\email{mclemanc@umich.edu}
\author[C.~Rasmussen]{Christopher Rasmussen}
\address{Department of Mathematics and Computer Science \\ Wesleyan University \\ Middletown CT, 06459 \\ United States}
\email{crasmussen@wesleyan.edu}
\begin{document}

\begin{abstract}
  Heavenly abelian varieties are abelian varieties defined over number fields that exhibit constrained $\ell$-adic Galois representations for some rational prime $\ell$. At the ICMS Workshop held in November 2024, we presented evidence for two finiteness conjectures around the distribution of heavenly elliptic curves over quadratic number fields, one in terms of isomorphism classes of curves, and a second in terms of quadratic fields which admit heavenly elliptic curves. We prove these two conjectures are equivalent.
\end{abstract}

\begin{CJK*}{UTF8}{goth}

\maketitle

\section{Introduction}\label{sec:intro}

Let us fix an algebraic closure $\Qbar$ of $\QQ$, let $\ell$ be a rational prime, and let $K \subseteq \Qbar$ be a number field. Recall that in the Ihara program to study Galois groups through the natural pro-$\ell$ quotients of fundamental groups, one encounters the tower of fields $K \subseteq \yama \subseteq \ten \subseteq \Qbar$, where $\yama$ (``yama'') and $\ten$ (``ten'') are defined as follows: $\ten := \ten(K, \ell)$ is the maximal pro-$\ell$ extension of $K(\bmu_{\ell^{\infty}}\!)$ unramified away from $\ell$, and $\yama := \yama(K, \ell)$ is the fixed field of the outer pro-$\ell$ Galois representation attached to $\PP^{1}_{K} \smallsetminus \{0, 1, \infty \}$. It is known that $\yama \subseteq \ten$ for any $K$ and $\ell$, and that $\yama(\QQ, \ell) = \ten(\QQ, \ell)$ for odd regular primes $\ell$. (More precisely, Sharifi \cite{Sharifi:2002} proved this is equivalent to the conjecture of Deligne and Ihara, which was later proved by Brown \cite{Brown:2012}.) 

An abelian variety $A/K$ is called \emph{heavenly} at $\ell$ if $K(A[\ell^\infty]) \subseteq \ten$. It follows from Serre-Tate theory that such a variety necessarily has good reduction away from $\ell$. Thus, for fixed $K$, fixed dimension $g$, and fixed prime $\ell$, there can be only finitely many abelian varieties $A/K$ of dimension $g$ which are heavenly at $\ell$.
For fixed $K$ and $g$, let $\mathcal{H}(K,g)$ be the set of pairs $([A]_{K}, \ell)$ where $[A]_{K}$ is the $K$-isomorphism class of a $g$-dimensional abelian variety $A/K$ which is heavenly at $\ell$. In \cite{Rasmussen-Tamagawa:2008}, the second author and Tamagawa conjectured that $\mathcal{H}(K,g)$ is finite for any choice of $K$ and $g$; equivalently, for $\ell \gg_{K,g} 0$, there should be \emph{no} abelian varieties $A/K$ in dimension $g$ which are heavenly at $\ell$. Many partial results towards the conjecture have been obtained, and a description of recent results is given in \cite[\S3]{McLeman-Rasmussen:2024}. 

The article \cite{McLeman-Rasmussen:2024} studies heavenly elliptic curves over quadratic fields. Based on striking connections between the traces of Frobenius elements for curves with complex multiplication and for heavenly elliptic curves, we conjectured the following: excluding infinite families arising with $\ell < 7$, or from base change constructions, there are only finitely many $\Qbar$-isomorphism classes of elliptic curves which are defined over a quadratic field $K$ and which are heavenly at some prime $\ell$. We present the conjecture in two ways, one in terms of the isomorphism classes of heavenly elliptic curves, and the other in terms of the quadratic fields admitting new examples of heavenly elliptic curves. The purpose of the present article is to state both forms precisely and demonstrate their equivalence.


For any number field $K$, integer $g > 0$, and prime $\ell$, let $\mathcal{H}(K, g, \ell)$ denote the set of $K$-isomorphism classes of abelian varieties which are defined over $K$, have dimension $g$, and which are heavenly at $\ell$. For any $B \geq 1$, set
\[ \mathcal{H}_{B} := \bigcup_{\ell \geq B\vphantom{[}} \bigcup_{\substack{K \\ [K:\QQ]=2}} \left\{ \bigl( [A]_{K}, \ell \bigr) : [A]_{K} \in \mathcal{H}(K,1,\ell) \right\}. \]
We also define $\overline{\mathcal{H}}_{B}$, which tracks analogous pairs, but only up to $\Qbar$-isomorphism:
\[ \overline{\mathcal{H}}_{B} := \left\{ \bigl( [A \times_{K} \Qbar]_{\Qbar}, \ell \bigr) : ([A]_{K}, \ell) \in \mathcal{H}_{B} \right\}. \]
Note $\overline{\mathcal{H}}_{B}$ is in obvious bijection with the set of equivalence classes of $\mathcal{H}_{B}$ under the equivalence relation
\[ ([A]_{K}, \ell) \sim ([A']_{K'}, \ell')\ \quad \text{precisely if} \quad \ell = \ell' \ \text{and}\ A \times_K \Qbar \cong_{\Qbar} A' \times_{K'} \Qbar. \]
Note that if $E/\QQ$ is heavenly at some prime $\ell_0$, then the infinite family $\{ ([E \times_{\QQ} K]_{K}, \ell_0) \}_{K}$, indexed by all quadratic fields $K$, is a subset of $\mathcal{H}_{1}$. To exclude the redundancy of such examples, we further define 
\begin{align*}
  \overline{\mathcal{H}}(K, g) & := \{ ([A \times_{K} \Qbar]_{\Qbar}, \ell) : ([A]_{K}, \ell) \in \mathcal{H}(K, g) \}, \\
  \mathcal{H}^{\circ}_{B} & := \left\{ ([A]_{K}, \ell) \in \mathcal{H}_{B} : ([A \times_{K} \Qbar]_{\smash{\Qbar}}, \ell) \not\in \overline{\mathcal{H}}(\QQ, 1) \right\}, \\
  \overline{\mathcal{H}}^{\circ}_{B} & := \left\{ ([A]_{\smash{\Qbar}}, \ell) \in \overline{\mathcal{H}}_{B} : ([A]_{\smash{\Qbar}}, \ell) \not\in \overline{\mathcal{H}}(\QQ, 1) \right\}.
\end{align*}

However, $\overline{\mathcal{H}}^{\circ}_{1}$ is not finite, either. By \cite[Prop.~5.4]{Rasmussen-Tamagawa:2017}, there are infinitely many pairs $([E]_{\Qbar}, \ell_0) \in \overline{\mathcal{H}}_{1}^{\circ}$ with $\ell_0 = 2$; we expect a similar result for $\ell_0 \in \{3, 5\}$ (see \cite[Rmk.~3.5]{McLeman-Rasmussen:2024}). But for each $\ell_0 > 5$, $\overline{\mathcal{H}}_{1}$ contains only finitely many pairs with $\ell = \ell_0$ \cite[Cor.~3.4]{McLeman-Rasmussen:2024}, suggesting the following.

\begin{conjecture}\label{conj:H_finite}
  The set $\overline{\mathcal{H}}_{7}$ is finite.
\end{conjecture}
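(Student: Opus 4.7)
By \cite[Cor.~3.4]{McLeman-Rasmussen:2024}, the subset of $\overline{\mathcal{H}}_7$ with any fixed prime $\ell$ is already finite, so the content of Conjecture~\ref{conj:H_finite} is that only finitely many primes $\ell \geq 7$ can contribute a pair at all. The plan is therefore to produce a bound $\ell_0$ past which no elliptic curve over any quadratic field is heavenly at $\ell$; combined with the per-prime finiteness just cited, this yields finiteness of the entire union.

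The first step is to unpack the heavenly condition modulo $\ell$. For $E/K$ heavenly at $\ell$ with $K$ quadratic, the inclusion $K(E[\ell]) \subseteq \ten$ makes $K(E[\ell])/K(\bmu_\ell)$ a pro-$\ell$ extension, forcing the image of $\rho_{E,\ell}$ restricted to $G_{K(\bmu_\ell)}$ to be an $\ell$-subgroup of $\GL_2(\FF_\ell)$, hence unipotent after conjugation. Consequently $\rho_{E,\ell}(G_K)$ lies in a Borel, with diagonal characters that are powers of the mod-$\ell$ cyclotomic character $\omega$; in particular $E$ carries a $K$-rational cyclic $\ell$-isogeny, i.e., a non-cuspidal $K$-point of $X_0(\ell)$. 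The second step is to invoke uniform-boundedness results for rational isogenies over quadratic fields (after Mazur, Kamienny, Parent, Bilu--Parent, Najman, Banwait--Najman, and others) to produce an explicit $\ell_1$ such that for $\ell \geq \ell_1$, every non-cuspidal quadratic point of $X_0(\ell)$ corresponds to a CM elliptic curve. For $\ell \geq \ell_1$, any heavenly $E/K$ therefore has complex multiplication by an order in an imaginary quadratic field $F$.

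The third and most difficult step is to dispatch the CM case. Let $\psi$ be the Hecke Grössencharacter of $KF$ attached to $E$: good reduction outside $\ell$ restricts $\psi$ to have conductor dividing a power of $\ell$, and the heavenly condition forces $\psi \pmod{\lambda}$, for every prime $\lambda \mid \ell$ of $KF$, to factor through the cyclotomic quotient of $G_{KF}$. Together with the fixed infinity type, these constraints should rigidly pin $\psi$ down up to finite ambiguity; enumerating the finitely many candidate fields $F$ by discriminant bounds and tracking the behavior of $\psi$ under the splitting of $\ell$ in $F$ and $K$, one aims to extract an $\ell_0 \geq \ell_1$ past which no such $\psi$ exists. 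The main obstacle lies precisely here: the analogous $\QQ$-case in \cite{Rasmussen-Tamagawa:2008} was handled via delicate Frobenius-trace congruences, but adapting that argument to a quadratic base --- particularly the subcase $F \subseteq K$, where the CM is already $K$-rational --- seems to require uniform bounds on Artin conductors of Hecke characters with prescribed mod-$\ell$ reduction, which appears to be the principal unresolved piece.
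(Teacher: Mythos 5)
The statement you have attempted is Conjecture~\ref{conj:H_finite}, which the paper presents as an \emph{open} conjecture: the paper proves no such finiteness, only the equivalence with Conjecture~\ref{conj:R_finite} (Theorem~\ref{thm:equifinite}). So there is no proof in the paper to compare with, and your text is, by its own admission, a research plan rather than a proof. Your first step is sound and is exactly Lemma~\ref{lemma:RT3.3} of the paper (a Borel-shaped mod-$\ell$ representation with powers of the cyclotomic character on the diagonal, hence a $K$-rational $\ell$-isogeny), and your reduction via \cite[Cor.~3.4]{McLeman-Rasmussen:2024} to bounding the set of contributing primes is the right framing. The gaps are in the two remaining steps.

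First, your step two asserts the existence of an explicit $\ell_1$ such that for all primes $\ell \geq \ell_1$ and \emph{all} quadratic fields $K$, every non-cuspidal $K$-point of $X_0(\ell)$ is CM. No such unconditional uniform theorem exists: this is the quadratic analogue of Mazur's isogeny theorem and a piece of Serre's uniformity problem. The results you cite either fix the quadratic field (Momose, Banwait--Najman), assume GRH (Larson--Vaintrob), or treat special families (Bilu--Parent-type methods); none gives uniformity over all quadratic fields simultaneously, which is what a union over all $K$ with $[K:\QQ]=2$ requires. One might hope the extra heavenly constraints (good reduction outside $\ell$, diagonal characters equal to powers of $\chi$) substitute for bare isogeny uniformity, but as written the step rests on an unproved statement. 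Second, you explicitly concede that the CM case --- in particular the subcase where the CM field is contained in $K$ --- is unresolved; yet that is precisely where the content of the conjecture lies, since the evidence in \cite{McLeman-Rasmussen:2024} concerns coincidences of Frobenius traces for CM curves, and CM curves over quadratic fields are the expected potential source of examples at arbitrarily large $\ell$. Without a uniform bound (in $K$, the CM field $F$, and $\ell$) excluding Hecke characters of conductor supported at $\ell$ whose mod-$\lambda$ reductions are powers of the cyclotomic character, the argument does not close. In short, the proposal is a plausible outline, but both of its essential steps are open, so it does not prove the conjecture.
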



Let us now describe the distribution of heavenly elliptic curves from a different perspective, by considering the following question: for which choices of a quadratic field $K$ and a rational prime $\ell$ do there exist elliptic curves $E/K$ which are heavenly at $\ell$? For some choices of $K$ and $\ell$, it  holds that every heavenly $E/K$ is $\Qbar$-isomorphic to some heavenly $E_0/\QQ$. Analogous to the restriction placed on $\overline{\mathcal{H}}^{\circ}_{B}$, we refine the question to exclude such pairs. So for any $B \geq 1$, define the set $\mathcal{R}_{B}$ as follows: an ordered pair $(\ell, K) \in \mathcal{R}_{B}$ precisely if
\begin{enumerate}[left=0pt, label={(\roman*)}]
\item $\ell \geq B$ is prime,
\item there exists an elliptic curve $E/K$, heavenly at $\ell$, such that
\item $E \times_{K} \Qbar \not\cong A \times_{\QQ} \Qbar$ for any $A/\QQ$ which is heavenly at $\ell$ over $\QQ$.
\end{enumerate}

\begin{conjecture}\label{conj:R_finite}
  The set $\mathcal{R}_{7}$ is finite.
\end{conjecture}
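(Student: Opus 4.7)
The plan is to split Conjecture~\ref{conj:R_finite} into (a) showing that for each fixed prime $\ell \geq 7$ only finitely many quadratic fields $K$ satisfy $(\ell, K) \in \mathcal{R}_{7}$, and (b) showing that only finitely many primes $\ell \geq 7$ occur in the first coordinate of $\mathcal{R}_{7}$.

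For part (a), fix $\ell \geq 7$ and invoke \cite[Cor.~3.4]{McLeman-Rasmussen:2024}: only finitely many $\Qbar$-isomorphism classes of elliptic curves heavenly at $\ell$ are defined over any quadratic field. Each such class $[E]_{\Qbar}$ contributes finitely many pairs $(\ell, K) \in \mathcal{R}_{7}$: either $\QQ(j(E))$ is a quadratic field, in which case $K = \QQ(j(E))$ is forced, or $j(E) \in \QQ$, in which case condition (iii) excludes base change and one needs a nontrivial twist of a $\QQ$-model of order dividing $|\Aut(E_{\Qbar})| \leq 6$ to give a heavenly $K$-model. Since such twist orders are coprime to $\ell$, the twist character must factor through the prime-to-$\ell$ cyclotomic quotient of $\Gal(\ten(K, \ell)/K)$, forcing the ramification of $K$ itself to be restricted to primes above $\ell$. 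By Hermite--Minkowski applied with bounded degree and bounded ramification, only finitely many such $K$ arise. Summing over the finite list of $\Qbar$-classes completes part (a).

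For part (b), the strategy is moduli-theoretic. Each heavenly pair $(E, K)$ at $\ell$ corresponds to a $K$-rational point on a modular curve $X_{H_{\ell}}$, where $H_{\ell} \leq \GL_{2}(\ZZ / \ell^{n})$ is the subgroup of matrices compatible with the $\ten$-field image restriction at level $\ell^{n}$. As $\ell$ grows, $H_{\ell}$ shrinks sharply and the genus of $X_{H_{\ell}}$ grows rapidly; one then aims to show that the $K$-points of $X_{H_{\ell}}$ for quadratic $K$ arise only from CM or from base change (the latter excluded by condition (iii)) once $\ell$ is sufficiently large. This would follow from uniform control of quadratic points on the family $\{X_{H_{\ell}}\}$ via Faltings' theorem and Chabauty-style arguments for quadratic points, combined with a separate treatment of the CM locus using the class field theory of imaginary quadratic fields and the explicit description of $\ten(K, \ell)$ under CM as discussed in \cite[\S3]{McLeman-Rasmussen:2024}.

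The main obstacle is part (b). Producing an absolute bound on $\ell$ essentially requires a uniform open-image theorem for elliptic curves over quadratic fields constrained by the heavenly condition --- a statement strictly stronger than Mazur-type torsion bounds, because it controls the entire mod-$\ell$ image rather than just rational torsion. Existing results surveyed in \cite[\S3]{McLeman-Rasmussen:2024} address fixed $\ell$ or impose auxiliary hypotheses such as semistability, but fall short of a uniform bound on $\ell$. A complete proof of Conjecture~\ref{conj:R_finite} therefore likely requires either a new uniform non-existence theorem for $K$-rational points on the modular curves $X_{H_{\ell}}$ as $\ell$ varies over primes $\geq 7$ and $K$ varies over quadratic fields, or a structural advance in the pro-$\ell$ Galois theory of $\ten(K, \ell)$ that rules out the heavenly image condition for large $\ell$ outright.
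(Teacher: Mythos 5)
The statement you are trying to prove is a \emph{conjecture}: the paper does not prove that $\mathcal{R}_{7}$ is finite, and neither do you. Its only result about Conjecture~\ref{conj:R_finite} is Theorem~\ref{thm:equifinite}, which shows it is equivalent to Conjecture~\ref{conj:H_finite} by exhibiting the surjections $\beta \colon \mathcal{H}^{\circ}_{B} \to \mathcal{R}_{B}$ and $\alpha \colon \mathcal{H}^{\circ}_{B} \to \overline{\mathcal{H}}^{\circ}_{B}$ and proving both have finite fibers (the fiber argument for $\alpha$ being the real work, Lemma~\ref{lemma:case_special_j} and Proposition~\ref{prop:case_general_j}). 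Your proposal explicitly concedes that its part (b) --- bounding the primes $\ell \geq 7$ that can appear in $\mathcal{R}_{7}$ --- requires a uniform open-image-type theorem or a uniform statement about quadratic points on a family of modular curves that does not exist; that is exactly the open content of the conjecture, so what you have written is a research program, not a proof, and no comparison with a nonexistent proof in the paper can rescue it.

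There is also a concrete gap in your part (a), even granted that it only aims at fixed $\ell$. After invoking \cite[Cor.~3.4]{McLeman-Rasmussen:2024} to get finitely many $\Qbar$-classes, you dispose of the case $j(E) \in \QQ$ by asserting that the twist character ``must factor through the prime-to-$\ell$ cyclotomic quotient of $\Gal(\ten(K,\ell)/K)$, forcing the ramification of $K$ itself to be restricted to primes above $\ell$,'' and then apply Hermite--Minkowski. The heavenly hypothesis constrains $K(E[\ell^{\infty}])/K$, not $K/\QQ$ directly, and the inference about where $K$ ramifies is not justified as stated. The paper's Proposition~\ref{prop:case_general_j} shows how delicate this step actually is: one must extend the quadratic twist character $\lambda$ on $G_K$ to a character $\varepsilon$ on $G_{\QQ}$ using the triangular shape of $\rho_{A}$ from Lemma~\ref{lemma:RT3.3} and the linear disjointness of $K$ and $\QQ(A_0[\ell])$, then run a good-reduction argument via N\'eron--Ogg--Shafarevich to conclude either $K \subseteq \QQ(A_0[\ell])$ or that the class already comes from a heavenly curve over $\QQ$ (which condition (iii) forbids). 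Your sketch skips this entirely, and the special $j$-invariants $0$ and $1728$ (where twists are not merely quadratic, handled separately in Lemma~\ref{lemma:case_special_j}) are waved at rather than treated. So even the fixed-$\ell$ half of your plan needs the paper's fiber arguments, and the variable-$\ell$ half remains the unproven conjecture itself.
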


\begin{remark}
Condition (iii) in the definition of $\mathcal{R}_{B}$ is quite mild. The finite set $\mathcal{H}(\QQ, 1)$ was computed explicitly in \cite{Rasmussen-Tamagawa:2008}, and spans a set $J$ of only $16$ rational $j$-invariants. So (iii) is satisfied automatically if $j(E) \not\in J$.
\end{remark}

\begin{remark}
Theorem \ref{thm:equifinite} below implies that Conjectures \ref{conj:H_finite} and \ref{conj:R_finite} are equivalent. Let us explain briefly why the equivalence is not immediate. The concern is the following: given a single pair $([\overline{E}]_{\Qbar}, \ell) \in \overline{\mathcal{H}}_{7}$, there could exist an infinite sequence $A_{i}/K_{i}$ of elliptic curves defined over quadratic fields, each heavenly at $\ell$ and satisfying $A_{i} \times_{K_{i}} \Qbar \cong \overline{E}$. In this case, $\mathcal{R}_{7}$ possibly contains the infinite family $\{ (\ell, K_{i}) \}$. However, as demonstrated in the proof of Theorem \ref{thm:equifinite}, this is possible only if $\overline{E}$ can also be represented by a heavenly elliptic curve defined over $\QQ$, and consequently the existence of such a pair in $\overline{\mathcal{H}}_{7}$ does \emph{not} automatically imply $\mathcal{R}_{7}$ is infinite.
\end{remark}

\section{Equivalence of Conjectures}

For any prime $\ell > 2$, set $\ell^{\star} := (-1)^{(\ell-1)/2}\ell$, so that $\QQ(\sqrt{\ell^{\star}}) \subseteq \QQ(\bmu_\ell)$. Within $\GL_2(\FF_\ell)$, we let $\mathcal{U}$ and $\mathcal{B}$ denote the subgroup of unit upper triangular matrices, and the Borel subgroup of upper triangular matrices, respectively, so that $\mathcal{U} \leq \mathcal{B} \leq \GL_2(\FF_\ell)$ and $\sharp \mathcal{U} = \ell$. Let $\chi \colon G_{\QQ} \to \FF_{\ell}^\times$ be the $\ell$-adic cyclotomic character modulo $\ell$. We recall the following fact, which is a specific case of \cite[Lemma 3.3]{Rasmussen-Tamagawa:2017}.

\begin{lemma}\label{lemma:RT3.3}
  Let $K$ be a number field and let $A/K$ be an elliptic curve which is heavenly at $\ell$. With respect to some basis for $A[\ell]$, the representation $\rho_{A} \colon G_K \to \GL_2(\FF_\ell)$ has the form
  \[ \rho_A = \begin{pmatrix*} \chi^{i_{1}} & \star \\ 0 & \chi^{i_{2}} \end{pmatrix*}, \qquad 0 \leq i_{1}, i_{2} < \ell - 1. \qedhere \]
\end{lemma}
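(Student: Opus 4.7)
The plan is to exploit the containment $K(A[\ell]) \subseteq \ten$ together with the structural fact that $\ten/K(\bmu_{\ell^\infty})$ is pro-$\ell$. First I would produce a $G_K$-stable line in $A[\ell]$, yielding the upper triangular shape, and then identify the characters on the diagonal by cyclotomic considerations.

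Set $H := \rho_A(G_{K(\bmu_{\ell^\infty})})$. Since $K(A[\ell]) \subseteq \ten$, the group $H$ is a quotient of $\Gal(\ten/K(\bmu_{\ell^\infty}))$, so $H$ is a finite $\ell$-subgroup of $\GL_2(\FF_\ell)$. As every Sylow $\ell$-subgroup of $\GL_2(\FF_\ell)$ is conjugate to $\mathcal{U}$, a suitable change of basis for $A[\ell]$ places $H \subseteq \mathcal{U}$. If $H \ne 1$, each nontrivial element of $H$ fixes only the line $\langle e_1\rangle$, so $V := A[\ell]^{H}$ is one-dimensional; as $H$ is normal in $\rho_A(G_K)$ (because $K(\bmu_{\ell^\infty})/K$ is Galois), the line $V$ is $G_K$-stable. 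If instead $H = 1$, then $\rho_A$ factors through the abelian group $\Gal(K(\bmu_{\ell^\infty})/K) \subseteq \ZZ_\ell^\times$. Its image in $\GL_2(\FF_\ell)$ decomposes as the commuting product of an $\ell$-part (a unipotent cyclic $\ell$-group, in some conjugate of $\mathcal{U}$) and an $\ell'$-part (cyclic of order dividing $\ell-1$, diagonalizable over $\FF_\ell$ since its eigenvalues are $(\ell-1)$-st roots of unity lying already in $\FF_\ell$). Either the $\ell$-part is nontrivial, in which case its unique invariant line is preserved by the commuting $\ell'$-part, or the $\ell$-part is trivial and any eigenline of the $\ell'$-part works. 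In every situation, $\rho_A$ admits a $G_K$-stable line, so it is upper triangular in a suitable basis.

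It remains to identify the diagonal characters $\psi_1, \psi_2 \colon G_K \to \FF_\ell^\times$. Each has image of order dividing $\ell - 1$, so the fixed field $K(\psi_i)$ of $\ker\psi_i$ is a finite Galois extension of $K$ of degree prime to $\ell$. Since $K(\psi_i) \subseteq K(A[\ell]) \subseteq \ten$ and $\ten/K(\bmu_{\ell^\infty})$ is pro-$\ell$, the composite $K(\psi_i) \cdot K(\bmu_{\ell^\infty})/K(\bmu_{\ell^\infty})$ has degree both a power of $\ell$ and prime to $\ell$, hence trivial; so $K(\psi_i) \subseteq K(\bmu_{\ell^\infty})$. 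A further degree comparison, using that $K(\bmu_{\ell^\infty})/K(\bmu_\ell)$ is pro-$\ell$, forces $K(\psi_i) \subseteq K(\bmu_\ell)$. Thus $\psi_i$ factors through the cyclic group $\Gal(K(\bmu_\ell)/K) \hookrightarrow \FF_\ell^\times$, on which $\chi$ is a faithful character; therefore $\psi_i = \chi^{i_?}$ for a unique integer $0 \leq i_? < \ell - 1$.

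The principal obstacle is the abelian case $H = 1$: one must rule out the possibility that $\rho_A(G_K)$ lies in a non-split Cartan subgroup, which would preclude an $\FF_\ell$-rational invariant line. The separation into $\ell$- and $\ell'$-parts handles this, but is the most delicate portion of the verification; the rest of the argument is essentially a direct application of normalizer considerations in $\GL_2(\FF_\ell)$ and the basic tower structure of $\ten$.
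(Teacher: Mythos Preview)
Your argument is correct. The paper itself does not prove this lemma; it merely cites \cite[Lemma~3.3]{Rasmussen-Tamagawa:2017} as a special case, so there is no in-paper proof to compare against. Your proof supplies exactly the expected details: the pro-$\ell$ nature of $\ten/K(\bmu_{\ell^\infty})$ forces $\rho_A(G_{K(\bmu_{\ell^\infty})})$ into a Sylow $\ell$-subgroup, normality then produces a $G_K$-stable line, and a second degree-chasing argument pins the diagonal characters inside the group generated by $\chi$. The case split on whether $H$ is trivial is handled cleanly; your observation that the $\ell'$-part of the image has order dividing $\ell-1$ (inherited from $\ZZ_\ell^\times$) is precisely what excludes the non-split Cartan obstruction.

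One very small inaccuracy: the exponent $i_?$ need not be \emph{unique} in $\{0,\dots,\ell-2\}$ when $[K(\bmu_\ell):K] < \ell-1$, since then $\chi$ restricted to $G_K$ has order strictly dividing $\ell-1$. Existence is all the lemma asserts, so this does not affect the validity of your proof.
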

In particular, with respect to this basis, $\rho_A(G_{\QQ}) \leq \mathcal{B}$ and $\rho_A(G_{\QQ(\bmu_{\ell})}) \leq \mathcal{U}$.

We define maps $\alpha$ and $\beta$ as follows: 
  \begin{align*}
    \alpha & \colon \mathcal{H}^{\circ}_{B} \to \overline{\mathcal{H}}^{\circ}_{B}, & ([A]_{K}, \ell) & \mapsto ([A]_{\Qbar}, \ell), \\
    \beta & \colon \mathcal{H}_{B}^{\circ} \to \mathcal{R}_{B}, & ([A]_{K}, \ell) & \mapsto (\ell, K).
  \end{align*}
  It is evident that both maps are surjective. Further, it is straightforward to see that $\beta$ has finite fibers: Suppose $(\ell, K) \in \mathcal{R}_{B}$. If $([A]_{K}, \ell) \in \beta^{-1}((\ell, K))$, then necessarily $A$ has good reduction away from $\ell$.  By the unpolarized version of the Shafarevich Conjecture \cite{Zarhin:1985}, there are only finitely many such elliptic curves defined over $K$, up to $K$-isomorphism. Thus, the fiber $\beta^{-1}((\ell, K))$ is finite. It is also true that every fiber of $\alpha$ is finite, though the arguments are rather more intricate. Let us first consider the case of an exceptional $j$-invariant.
  
\begin{lemma}\label{lemma:case_special_j}
Suppose $\bigl([E]_{\Qbar}, \ell \bigr) \in \overline{\mathcal{H}}_3^\circ$ and let $X$ be the fiber of $\alpha$ over $([E]_{\Qbar}, \ell)$. Set $j := j(E)$.
  \begin{enumerate}[left=0pt, label={(\alph*)}]
  \item If $j = 0$ and $\ell > 3$, then $X \subseteq \mathcal{H}(\QQ(\sqrt{-3}), 1, \ell) \cup \mathcal{H}(\QQ(\sqrt{-3\ell^\star}), 1, \ell)$.
  \item If $j = 1728$ and $\ell > 2$, then $X \subseteq \mathcal{H}(\QQ(\sqrt{-1}), 1, \ell) \cup \mathcal{H}( \QQ(\sqrt{-\ell^\star}), 1, \ell)$.
  \end{enumerate}
  In either case, the fiber $X$ is finite.
\end{lemma}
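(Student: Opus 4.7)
I treat case (a); case (b) is parallel, with $\ZZ[\zeta_3]$ replaced by $\ZZ[i]$ and sextic twists replaced by quartic ones. Set $F := \QQ(\sqrt{-3})$ and, for any $([A]_K, \ell) \in X$, let $L := KF$. The plan is to combine the upper-triangular form from Lemma \ref{lemma:RT3.3} with the CM structure forced by $j(A) = 0$, by restricting from $K$ down to the biquadratic compositum $L$.

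Assume $K \neq F$ (the case $K = F$ is immediate). Over $L$, $A_L$ has CM by $\mathcal{O} := \ZZ[\zeta_3]$. If $\ell \equiv 2 \pmod 3$, then $\ell$ is inert in $\mathcal{O}$ and $G_L$ acts on $A[\ell] \cong \FF_{\ell^2}$ through $\FF_{\ell^2}$-scalars, leaving no $G_L$-stable $\FF_\ell$-line and contradicting Lemma \ref{lemma:RT3.3}. So $\ell \equiv 1 \pmod 3$ and $\ell = \mathfrak{l} \bar{\mathfrak{l}}$ splits in $\mathcal{O}$, giving $A[\ell] = A[\mathfrak{l}] \oplus A[\bar{\mathfrak{l}}]$. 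The nontrivial $c \in \Gal(L/K)$ lifts complex conjugation on $F$ and swaps these two summands, so the existence of a $G_K$-stable line forces $G_L$ to act on $A[\ell]$ by a single scalar character $\psi = \chi^{i_1}|_{G_L} = \chi^{i_2}|_{G_L}$, with $\psi^2 = \chi|_{G_L}$ by the determinant. Since $\psi$ must be a power of $\chi$, this forces $\chi^{2 i_1 - 1}|_{G_L}$ to be trivial; given that $L$ is biquadratic, this is possible only if $L \supseteq \QQ(\sqrt{\ell^\star})$ (the unique quadratic subfield of $\QQ(\bmu_\ell)$).

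Combined with $F \subseteq L$, one obtains $L = \QQ(\sqrt{-3}, \sqrt{\ell^\star})$, whose three quadratic subfields are $F$, $\QQ(\sqrt{\ell^\star})$, and $\QQ(\sqrt{-3\ell^\star})$. Removing $F$ leaves candidates $K \in \{\QQ(\sqrt{\ell^\star}), \QQ(\sqrt{-3\ell^\star})\}$. To exclude $K = \QQ(\sqrt{\ell^\star})$, I would use the hypothesis $([A]_K, \ell) \in \mathcal{H}^{\circ}_{B}$: writing $A = E_0^{(d)}$ for $E_0 : y^2 = x^3 + 1$ and $d \in K^\times/(K^\times)^6$, the combined constraints (good reduction away from $\ell$, scalar $G_L$-action, upper-triangular $\rho_A$) together with the inclusion $K \subseteq \QQ(\bmu_\ell)$ should force $d \in \QQ^\times \cdot (K^\times)^6$, i.e., $A$ is the base change of a heavenly elliptic curve over $\QQ$, contradicting the $\circ$-condition. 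For each permitted $K$, finiteness of $X \cap \mathcal{H}(K, 1, \ell)$ follows from Zarhin's unpolarized Shafarevich theorem, exactly as used for $\beta$. The main obstacle is precisely the exclusion of $K = \QQ(\sqrt{\ell^\star})$; the earlier steps are essentially forced by the interplay of heavenly upper-triangularity, the CM splitting of $A[\ell]$, and the swap action of $c$.
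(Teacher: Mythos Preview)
Your route via the CM structure is genuinely different from the paper's, and the core of it is sound: forcing $G_L$ to act by scalars when $K \neq F$, and then reading off $L = \QQ(\sqrt{-3},\sqrt{\ell^\star})$ from $\chi^{2i_1-1}|_{G_L}=1$, is a nice argument. (Your dismissal of the inert case is slightly quick --- if the Hecke character happened to land in $\FF_\ell^\times \subset \FF_{\ell^2}^\times$ you would have stable lines --- but that situation is scalar action again and feeds into the same conclusion, so it is easily patched.)

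The real gap is exactly where you flag it: you have not excluded $K = \QQ(\sqrt{\ell^\star})$. Your sextic-twist sketch is not a proof; you would need to show that the twist parameter $d$ can be taken in $\QQ^\times \cdot (K^\times)^6$, and nothing you have written forces this. (It is true that if $A$ were a base change from $\QQ$ then, because $K \subseteq \QQ(\bmu_\ell)$ gives $\ten(K,\ell)=\ten(\QQ,\ell)$, the rational model would automatically be heavenly and the $\circ$-condition would bite --- but getting $d$ rational is the whole difficulty, and reduction constraints at primes over $2$, $3$, $\ell$ do not obviously pin it down.) The paper sidesteps this entirely with a one-line field-theoretic argument: by \cite[Lemma~3.15]{Bourdon-et-al:2017} one has $K(\bmu_6) \subseteq K(E[\ell])$, and since $[K(E[\ell]):K(\bmu_\ell)]$ divides $\sharp\,\mathcal{U} = \ell$ (odd), this forces $\bmu_6 \subseteq K(\bmu_\ell)$. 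If $K = \QQ(\sqrt{\ell^\star})$ then $K(\bmu_\ell) = \QQ(\bmu_\ell)$, which does not contain $\sqrt{-3}$ for $\ell > 3$ --- contradiction. This argument uses neither the $\circ$-condition nor any twist analysis, and in fact recovers the full list of three candidate $K$'s directly by counting quadratic subfields of $K(\bmu_\ell)$, so it subsumes most of your CM work as well.
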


\begin{proof}
  Suppose $j = 0$ and $\ell > 3$. As $([E]_{\Qbar}, \ell) \in \overline{\mathcal{H}}_3^{\smash{\circ}}$, $E$ is defined over some quadratic field $K$. Since $j = 0$, the CM field for $E$ is $\QQ(\bmu_6)$, and so by \cite[Lemma 3.15]{Bourdon-et-al:2017}, $K(\bmu_{6}) \subseteq K(E[\ell])$. Let $\rho_E$ be the $G_K$-representation attached to $E[\ell]$, and let $\Gamma := \Gal(K(E[\ell])/K)$. As $E$ is heavenly, $\Gamma \cong \rho_E(G_K) \leq \mathcal{B}$. Let $\Gamma_1 = \Gamma \cap \mathcal{U}$; clearly $K(\bmu_\ell) = K(E[\ell])^{\Gamma_1}$.
Note that 
  \[ [ K( E[\ell] ) : K( \bmu_{\ell} ) ] = [ K( E[\ell] ) : K( \bmu_{\ell}, \bmu_{6}) ] [ K( \bmu_{\ell}, \bmu_{6}) : K( \bmu_{\ell}) ], \]
  while $[K(\bmu_{\ell}, \bmu_{6}) : K(\bmu_{\ell})] \leq [\QQ(\bmu_{6}) : \QQ] = 2$. Since $\sharp \mathcal{U} = \ell$, it follows that $[ K( E[\ell] ) : K( \bmu_{\ell}) ]$ is odd. So $[ K( \bmu_{\ell}, \bmu_{6}) : K( \bmu_{\ell}) ] = 1$, i.e., $K( \bmu_{6} ) \subseteq K( \bmu_{\ell})$. We conclude $\sqrt{-3}, \sqrt{\vphantom{-}\ell^\star}, \sqrt{-3\ell^\star} \in K(\bmu_{\ell})$. Since $\ell \neq 3$, these three elements generate distinct quadratic subfields of $K(\bmu_{\ell})$. On the other hand, $K(\bmu_\ell) = K \cdot \QQ(\bmu_\ell)$ contains at most three quadratic subfields, so it must be that $K = \QQ(\sqrt{D})$ for some $D \in \{ -3, \ell^\star, -3\ell^\star \}$. Finally, $D = \ell^\star$ would imply $K(\bmu_\ell) = \QQ(\bmu_\ell)$, which does not contain $\bmu_6$ (as $\ell > 3$). So $D \in \{ -3, -3\ell^\star\}$, proving (a).

The proof of (b) is completely analogous to the argument for (a), except that we start by observing the CM field for $E$ is $\QQ(\bmu_{4})$, and deduce $D \in \{-1, -\ell^\star\}$ instead.
\end{proof}

The next proposition demonstrates that the fibers of $\alpha$ are finite for general $j$-invariants also. 

\begin{proposition}\label{prop:case_general_j}
  Suppose $\overline{E}/\Qbar$ is an elliptic curve, $\bigl([\overline{E}]_{\Qbar}, \ell \bigr) \in \overline{\mathcal{H}}^{\circ}_{3}$, and $j := j(\overline{E}) \not\in \{0, 1728 \}$. Let $X$ be the fiber of $\alpha$ over $([\overline{E}]_{\Qbar}, \ell)$.
  \begin{enumerate}[label={(\alph*)}, left=0pt]
    \item Suppose $K', K''$ are distinct quadratic fields and $A'/K'$, $A''/K''$ are elliptic curves satisfying $([A']_{K'}, \ell), ([A'']_{K''}, \ell) \in X$. Then $j(\overline{E}) \in \QQ$.
    \item Let $A_0/\QQ$ be any elliptic curve with $j(\overline{E}) = j(A_0)$. For every $([A]_{K}, \ell) \in X$, it holds that $K \subseteq \QQ(A_0[\ell])$.
    \end{enumerate}
In particular, the set $X$ must be finite.
  \end{proposition}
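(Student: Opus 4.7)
Part (a) follows in one sentence: for both $A'/K'$ and $A''/K''$, the $j$-invariant $j(\overline{E}) = j$ lies in the field of definition, so $j \in K' \cap K''$. Distinct quadratic number fields intersect in $\QQ$, so $j \in \QQ$.

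For part (b), I exploit that $j \notin \{0, 1728\}$ forces $\Aut_{\Qbar}(\overline{E}) = \{\pm 1\}$, so every $A/K$ in $X$ is a quadratic twist of $A_0 \times_\QQ K$ by some $d \in K^\times/(K^\times)^2$, with twist character $\psi \colon G_K \to \{\pm 1\}$ cutting out $K(\sqrt{d})/K$. Fixing a geometric isomorphism $A \to A_0$ over $K(\sqrt{d})$ and transferring it to a basis on $\ell$-torsion yields the relation $\rho_A = \psi \cdot \rho_{A_0}|_{G_K}$. Combined with Lemma~\ref{lemma:RT3.3}, this places $\rho_{A_0}(G_K)$ inside $\mathcal{B}$, so $A_0[\ell]$ contains a $G_K$-stable line $C_0$.

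The argument now bifurcates on whether $C_0$ is also $G_\QQ$-stable. If not, then for $\sigma \in G_\QQ \setminus G_K$ the line $C_0^\sigma \neq C_0$ gives a $G_K$-stable decomposition $A_0[\ell] = C_0 \oplus C_0^\sigma$, and $\rho_{A_0}(\sigma)$ interchanges the two summands, so it is anti-diagonal in this basis and in particular $\neq I$. Hence $\ker \rho_{A_0} \subseteq G_K$, equivalently $K \subseteq \QQ(A_0[\ell])$. If $C_0$ is $G_\QQ$-stable, then $\rho_{A_0}(G_\QQ) \subseteq \mathcal{B}$ has diagonal characters $\mu, \nu \colon G_\QQ \to \FF_\ell^\times$, and the auxiliary character $\eta := \mu \chi^{-i_1}$ satisfies $\eta|_{G_K} = \psi$, so $\eta^2 \in \{1, \chi_K\}$, where $\chi_K$ denotes the quadratic character of $K/\QQ$. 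When $\eta = \chi_K$ or $\eta^2 = \chi_K$, the character $\chi_K$ equals $\mu\chi^{-i_1}$ or its square, hence factors through $\rho_{A_0}(G_\QQ)$ via $\mu$ and $\chi = \det \rho_{A_0}$, again yielding $K \subseteq \QQ(A_0[\ell])$. The leftover sub-cases --- $\eta$ trivial, or $\eta = \chi_{K'}$ for some quadratic $K' \neq K$ --- are precisely those in which $A$ descends to an elliptic curve $A_0'/\QQ$ (either $A_0$ itself or its quadratic twist by a generator of $K'$) with $\rho_{A_0'}$ having cyclotomic diagonal $\chi^{i_1}, \chi^{i_2}$.

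The crux is a ``heavenly descent'' step: an elliptic curve $B/\QQ$ with good reduction away from $\ell$ and mod-$\ell$ Galois representation upper triangular with cyclotomic diagonal is itself heavenly at $\ell$ over $\QQ$, because combining heavenliness of $B \times_\QQ K$ over $K$ with the $\ell$-adic form of Lemma~\ref{lemma:RT3.3} forces $\rho_{B, \ell^n}(G_{\QQ(\bmu_{\ell^n})})$ into the unipotent subgroup of $\GL_2(\ZZ/\ell^n)$ at every level. Applied to $B = A_0'$ in the two leftover sub-cases, this produces a heavenly $A_0'/\QQ$ with $[A_0' \times_\QQ \Qbar]_{\Qbar} = [\overline{E}]_{\Qbar}$, contradicting $([A]_K, \ell) \in \mathcal{H}^\circ_3$. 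I expect this descent --- specifically controlling the index-2 gap between $G_\QQ$ and $G_K$ when lifting mod-$\ell$ structure to the $\ell$-adic tower --- to be the main technical obstacle. Once (b) is in hand, finiteness of $X$ is immediate: by (a), $K$ is restricted to a single quadratic field when $j \notin \QQ$, and by (b), to the finitely many quadratic subfields of $\QQ(A_0[\ell])$ when $j \in \QQ$; for each such $K$, the unpolarized Shafarevich theorem yields only finitely many $K$-isomorphism classes of elliptic curves with good reduction away from $\ell$.
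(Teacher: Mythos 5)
Your part (a), your finiteness deduction, and the overall strategy of part (b) (twist-character analysis, extending the twist to $G_\QQ$, producing a rational heavenly model to contradict membership in $\mathcal{H}^{\circ}_{3}$) run parallel to the paper, and your direct dichotomy on whether the $G_K$-stable line $C_0$ is $G_\QQ$-stable is a clean reorganization of the paper's linear-disjointness argument. But case 2 has two gaps. The smaller, fixable one: in the ``leftover sub-cases'' you control only the first diagonal character $\mu$. The second character satisfies $\nu|_{G_K} = \psi\chi^{i_2}|_{G_K}$ only, so $\nu\chi^{-i_2}$ could equal $\eta\chi_K$ rather than $\eta$, in which case the twist $A_0^{\eta}$ has diagonal $(\chi^{i_1}, \chi_K\chi^{i_2})$, not the cyclotomic diagonal you assert. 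You need the analogue of the paper's $\varepsilon_1 = \varepsilon_2$ step: e.g.\ set $\eta' := \nu\chi^{-i_2}$; then $\eta'\eta^{-1}$ is trivial on $G_K$, hence equals $1$ or $\chi_K$, and in the latter case $\chi_K = \nu\mu^{-1}\chi^{i_1 - i_2}$ factors through $\Gal(\QQ(A_0[\ell])/\QQ)$, giving $K \subseteq \QQ(A_0[\ell])$ directly.

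The serious gap is the ``heavenly descent'' crux. You state it with the hypothesis that $B = A_0'$ has good reduction away from $\ell$, but you never verify this, and you explicitly defer the proof of the descent itself (``I expect this descent \dots to be the main technical obstacle''). Since $A_0$ is an arbitrary $\QQ$-model with $j(A_0) = j$, the curve $A_0'$ may have additive reduction at a prime $p \neq \ell$ ramified in $K$ which only becomes good after base change to $K$; mod-$\ell$ information cannot rule this out, since N\'eron--Ogg--Shafarevich requires control of the full $\ell$-power torsion tower. This is exactly where the paper's real work lies: having produced $E_0$ with cyclotomic diagonal, it notes $[\QQ(E_0[\ell]) : \QQ(\bmu_\ell)]$ divides $\ell$, uses $K(A[\ell^\infty]) = K \cdot \QQ(E_0[\ell^\infty])$ to bound the ramification index of $p \neq \ell$ in $\QQ(E_0[\ell^n])/\QQ$ by $2$, kills it by parity because $[\QQ(E_0[\ell^n]) : \QQ(\bmu_\ell)]$ is odd, and only then invokes N\'eron--Ogg--Shafarevich \cite[Thm.~1]{Serre-Tate:1968} to get good reduction away from $\ell$, with heavenliness following from the criterion in \cite[Lemma 2.5]{McLeman-Rasmussen:2024} rather than from a level-by-level unipotence computation. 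Without this ramification/parity argument (or a substitute for it), the contradiction in your leftover sub-cases is not established, so the proof is incomplete at its key step.
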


\begin{proof}
  Let us first explain how the finiteness of $X$ follows from (a) and (b). Towards a contradiction, suppose $X$ is infinite. For any particular quadratic field $K$, $X$ may contain only finitely many pairs of the form $([A]_{K}, \ell)$; each such $A/K$ has good reduction away from $\ell$ and there are only finitely many such curves up to $K$-isomorphism by the Shafarevich Conjecture. Thus, there must exist pairs $([A]_{K}, \ell) \in X$ involving infinitely many distinct quadratic fields $K$. By (a), $j(\overline{E}) \in \QQ$, and we may choose $A_0/\QQ$ with the same $j$-invariant. Now (b) implies a contradiction, since $\QQ(A_0[\ell])$ contains only finitely many quadratic fields.

The claim in (a) is immediate. The curves $\overline{E}$, $A'$, $A''$ necessarily have the same $j$-invariant and so $j(\overline{E}) \in K' \cap K'' = \QQ$.

For (b), let $j = j(\overline{E})$ and take $A_0/\QQ$ with $j(A_0) = j$. Suppose $([A]_{K}, \ell) \in X$, and for the sake of contradiction, assume $K \not\subseteq \QQ(A_0[\ell])$. Since $A_0 \times_\QQ K$ and $A$ are both defined over $K$ and have the same $j$-invariant, they are quadratic twists of one another. Let $\lambda \colon G_K \to \{ \pm 1 \}$ be a quadratic character such that $(A_0 \times_{\QQ} K)^\lambda \cong_K A$. We will demonstrate that $\lambda$ may be extended to a quadratic character $\varepsilon$ on $G_\QQ$ satisfying $A_0^{\varepsilon} \times_{\QQ} K \cong_K A$.

Since $A$ is heavenly at $\ell$ over $K$, Lemma \ref{lemma:RT3.3} implies the associated representation $\rho_A$ on $A[\ell]$ has the form
\[ \rho_A \colon G_K \to \GL_2(\FF_\ell), \qquad \rho = \begin{pmatrix*} \chi^{i_1} & \star \\ 0 & \chi^{i_2} \end{pmatrix*}. \]
Let $\rho_{A_0} \colon G_\QQ \to \GL_2(\FF_\ell)$ be the representation attached to $A_0[\ell]$. Because $A$ is the twist of $A_0 \times_\QQ K$  by $\lambda$, we have
\[ \textstyle \eval{\rho_{A_0}}_{G_K} = \rho_A^\lambda = \begin{pmatrix*} \lambda \chi^{i_1} &  \star \\ 0 & \lambda \chi^{i_2} \end{pmatrix*}. \]
Set $G_0 := \Gal(\QQ(A_0[\ell])/\QQ)$. Since $K$ and $\QQ(A_0[\ell])$ are linearly disjoint, $G_0 \cong \Gal(K(A_0[\ell])/K)$ also, and so $\rho_{A_0}(G_\QQ) \cong \rho_{A_0}(G_K)$. On the other hand, $\rho_{A_0}(G_K) = \rho_A^\lambda(G_K)$ is a subgroup of $\mathcal{B}$ (under some choice of basis). Consequently, $\rho_{A_{0}}(G_{\QQ}) \subseteq \mathcal{B}$. There must exist characters $\phi_{i} \colon G_{\QQ} \to \FF_{\ell}^{\times}$ such that $\rho_{A_{0}}$ is upper triangular with $\phi_{1}$, $\phi_{2}$ on the diagonal. We have now expressed $\eval{\rho_{A_{0}}}_{G_{K}}$ in two distinct ways and conclude that, at least on $G_K$, $\lambda \chi^{i_{m}} = \phi_m$. For $m = 1, 2$, set $\varepsilon_{m} := \phi_{m} \chi^{-i_{m}}$. These are characters defined on $G_{\QQ}$ which satisfy $\eval{\varepsilon_{m}}_{G_{K}} = \lambda$. The $\varepsilon_{m}$ and $\lambda$ all factor through $G_{0}$. Necessarily, the natural projection maps yield the following commutative diagram:
\[ \begin{tikzcd}
    G_K \ar[r, ->>] \ar[rd, "\lambda"'] & G_0 \ar[d] & G_{\QQ} \ar[l, ->>] \ar[ld, "\varepsilon_j"] \\
    & \FF_\ell^\times & 
  \end{tikzcd} \]
We claim $\varepsilon_1 = \varepsilon_2$. For any $\sigma \in G_\QQ$, let $\overline{\sigma} \in G_0$ be the image of $\sigma$ under the natural projection $G_{\QQ} \to G_0$, and pick any $\tilde{\sigma} \in G_K$ that also maps to $\overline{\sigma}$ under the projection $G_{K} \to G_{0}$. Now $\varepsilon_1(\sigma) = \lambda(\tilde{\sigma}) = \varepsilon_2(\sigma)$ and so $\varepsilon_1 = \varepsilon_2$, and we relabel this character $\varepsilon$. Going further, we see $\varepsilon$ is a quadratic character, since $\varepsilon(G_\QQ) \subseteq \lambda(G_K) = \{\pm 1 \}$. Thus $\phi_{m} = \varepsilon \chi^{i_{m}}$. Taking $E_0 := A_0^{\varepsilon}$, the twist of $A_{0}$ by $\varepsilon$, we see this implies
\begin{equation}\label{eqn:rho_A0_both_ways}
  \rho_{A_{0}} = \begin{pmatrix*} \phi_{1} & \star \\ 0 & \phi_{2} \end{pmatrix*} = \begin{pmatrix*} \varepsilon \chi^{i_{1}} & \star \\ 0 & \varepsilon \chi^{i_{2}} \end{pmatrix*}, \qquad \rho_{E_{0}} = \begin{pmatrix*} \chi^{i_{1}} & \star \\ 0 & \chi^{i_{2}} \end{pmatrix*}.
\end{equation}
 But $\eval{\varepsilon}_{G_{K}} = \lambda$, so $(A_{0} \times_{\QQ} K)^{\lambda} \cong_{K} (A_{0}^{\varepsilon} \times_{\QQ} K) = E_{0} \times_{\QQ} K$. From this it follows that $E_{0} \times_{\QQ} \Qbar \cong \overline{E}$, and also that $K(A[\ell^{\infty}]) = K \cdot \QQ(E_{0}[\ell^{\infty}])$.

On $G_{\QQ(\bmu_\ell)}$, $\rho_{E_0}$ is unit upper-triangular, so $[\QQ(E_0[\ell]):\QQ(\bmu_\ell)]$ divides $\ell$. To show $E_0$ is heavenly at $\ell$, it suffices to show $E_0$ has good reduction away from $\ell$ \cite[Lemma 2.5]{McLeman-Rasmussen:2024}. Let $p \neq \ell$ be a rational prime. Since $A/K$ is heavenly at $\ell$, $A$ has good reduction at primes of $K$ above $p$. Take $n > 1$ and set $F = \QQ(E_0[\ell^{n}])$. Let $\pp$ be a prime of $F$ above $p$. As $KF \subseteq K(A[\ell^{\infty}])$, the extension $KF/K$ is unramified at primes of $K$ above $p$, and consequently the ramification index $e_{\pp\,|\,p} \leq 2$. Since $\QQ(\bmu_{\ell})/\QQ$ is unramified at $p$, we conclude $e_{\pp\,|\,p}$ divides $[F:\QQ(\bmu_{\ell})]$. However, $[ F : \QQ( E_{0}[\ell] ) ]$ is a power of $\ell$, hence odd, and $[ \QQ(E_{0}[\ell]) : \QQ(\bmu_{\ell})]$ divides $\ell$. Thus $e_{\pp\,|\,p} = 1$ and, as $n$ was arbitrary, $\QQ(E_0[\ell^{\infty}])/\QQ$ is unramified at $p$. By the Criterion of N\'{e}ron-Ogg-Shafarevich  \cite[Thm.~1]{Serre-Tate:1968}, $E_{0}$ has good reduction at $p$. Thus $E_0$ is heavenly at $\ell$ over $\QQ$ and $([E_0], \ell) \in \mathcal{H}(\QQ, 1)$. However, this implies $([\overline{E}]_{\Qbar}, \ell) \in \overline{\mathcal{H}}(\QQ, 1)$, contradicting the assumption that $([\overline{E}]_{\Qbar}, \ell) \in \overline{\mathcal{H}}^{\circ}_{3}$.
\end{proof}

The hard work is over, and we may verify that the two conjectures above are, in fact, equivalent.

\begin{theorem}\label{thm:equifinite}
Assume $B > 3$. If any one of the four sets $\overline{\mathcal{H}}_{B}$, $\overline{\mathcal{H}}^{\circ}_{B}$, $\mathcal{H}^{\circ}_{B}$, and $\mathcal{R}_{B}$ is finite, then all four sets are finite. In particular, Conjectures \ref{conj:H_finite} and \ref{conj:R_finite} are equivalent.
\end{theorem}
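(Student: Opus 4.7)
The plan is to leverage the two surjections already introduced, $\alpha \colon \mathcal{H}^\circ_B \to \overline{\mathcal{H}}^\circ_B$ and $\beta \colon \mathcal{H}^\circ_B \to \mathcal{R}_B$, to link three of the four sets in a single equi-finite chain. Since a set that surjects onto another via a map with finite fibers is finite if and only if the target is, it would suffice to verify that both $\alpha$ and $\beta$ have finite fibers. The fourth set $\overline{\mathcal{H}}_B$ I would then handle by arguing it differs from $\overline{\mathcal{H}}^\circ_B$ by only a finite set, independently of $B$.

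The finite-fiber claim for $\beta$ is essentially already recorded in the discussion preceding Lemma \ref{lemma:case_special_j}: for $(\ell, K) \in \mathcal{R}_B$, each preimage corresponds to a $K$-isomorphism class of an elliptic curve with good reduction away from $\ell$, and Zarhin's form of the Shafarevich conjecture \cite{Zarhin:1985} leaves only finitely many such classes. For $\alpha$ I would split by $j$-invariant of the $\Qbar$-class. Because $B > 3$, we have $\overline{\mathcal{H}}^\circ_B \subseteq \overline{\mathcal{H}}^\circ_3$ and $\ell \geq B > 3$, so Lemma \ref{lemma:case_special_j} covers the fibers over pairs with $j \in \{0, 1728\}$ (the constraints $\ell > 3$ and $\ell > 2$, respectively, are both satisfied), while Proposition \ref{prop:case_general_j} covers the fibers over pairs with $j \notin \{0, 1728\}$. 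In either case the fiber is finite.

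Finally, to bring in $\overline{\mathcal{H}}_B$, observe that the very definition of $\overline{\mathcal{H}}^\circ_B$ yields the containment
\[ \overline{\mathcal{H}}_B \setminus \overline{\mathcal{H}}^\circ_B \subseteq \overline{\mathcal{H}}(\QQ, 1), \]
and $\overline{\mathcal{H}}(\QQ, 1)$ is the image of $\mathcal{H}(\QQ, 1)$ under the forgetful passage to $\Qbar$-classes. Since $\mathcal{H}(\QQ, 1)$ is finite (explicitly computed in \cite{Rasmussen-Tamagawa:2008}), so is $\overline{\mathcal{H}}(\QQ, 1)$, and hence $\overline{\mathcal{H}}_B$ and $\overline{\mathcal{H}}^\circ_B$ are equi-finite. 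Assembled together, all four sets are mutually equi-finite, from which the equivalence of Conjectures \ref{conj:H_finite} and \ref{conj:R_finite} follows. In truth the substantive content already resides in Lemma \ref{lemma:case_special_j} and Proposition \ref{prop:case_general_j}; after those results, no genuine obstacle remains and the theorem reduces to careful bookkeeping along $\alpha$, $\beta$, and the finiteness of $\mathcal{H}(\QQ, 1)$.
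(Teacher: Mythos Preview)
Your proposal is correct and follows essentially the same approach as the paper: use the surjections $\alpha$ and $\beta$ with finite fibers (the latter via Shafarevich, the former via Lemma~\ref{lemma:case_special_j} and Proposition~\ref{prop:case_general_j}), then observe that $\overline{\mathcal{H}}_B$ and $\overline{\mathcal{H}}^\circ_B$ differ by at most the finite set $\overline{\mathcal{H}}(\QQ,1)$. Your added remark that $B>3$ is what makes Lemma~\ref{lemma:case_special_j} applicable to both exceptional $j$-invariants is a helpful clarification the paper leaves implicit.
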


\begin{proof}
  Because $\beta$ is surjective with finite fibers, it holds that $\mathcal{R}_{B}$ is finite if and only if $\mathcal{H}^{\circ}_{B}$ is finite. Likewise, $\alpha$ is surjective with finite fibers and so $\mathcal{H}^{\circ}_{B}$ is finite if and only if $\overline{\mathcal{H}}^{\circ}_{B}$ is finite. Finally, we note that $\overline{\mathcal{H}}^{\circ}_{B} \subseteq \overline{\mathcal{H}}_{B} \subseteq \overline{\mathcal{H}}^{\circ}_{B} \cup \overline{\mathcal{H}}(\QQ, 1)$. Since $\mathcal{H}(\QQ, 1)$ is finite, the set $\overline{\mathcal{H}}(\QQ, 1)$ must be finite. It follows that $\overline{\mathcal{H}}^{\circ}_{B}$ is finite if and only if $\overline{\mathcal{H}}_{B}$ is finite.
\end{proof}

\section*{Acknowledgements}

The authors thank Akio Tamagawa for many helpful discussions, and express their appreciation to an anonymous referee who provided many helpful suggestions that improved the exposition and precision in the paper.

\printbibliography

\end{CJK*}
\end{document}
